\newtheorem{thm}{Theorem}[section]
\newtheorem{prop}[thm]{Proposition}
\numberwithin{equation}{section}
\renewcommand{\thefootnote}
\author {B\'echir Amri and  Mounir Bedhiafi }
\date{ }
\begin{document}
\title{ A   formula  for the nonsymmetric Opdam's hypergeometric function of type $A_2$ }
\maketitle
\begin{center}
   Universit\'{e} Tunis El Manar, Facult\'{e} des sciences de Tunis,\\ Laboratoire d'Analyse Math\'{e}matique
       et Applications,\\ LR11ES11, 2092 El Manar I, Tunisie.\\
     \textbf{ e-mail:} bechir.amri@ipeit.rnu.tn,   bedhiafi.mounir@yahoo.fr
\end{center}
\begin{abstract}
The aim of this paper is to give an explicit formula  for the nonsymmetric Heckman-Opdam's hypergeometric function of type $A_2$.
This is obtained by differentiating    the corresponding   symmetric   hypergeometric function.\\ \\
\\  \textbf{ Keywords}. Root systems, Cherednik operators, Hypergeometric functions.
\\\textbf{Mathematics Subject Classification}. Primary 33C67;17B22. Secondary 	33D52 .

  \end{abstract}
\section{Introduction  }The theory of the  hypergeometric functions associated
to root systems  started in the 1980s with  Heckman and Opdam via a generalization of the spherical functions on Riemannian symmetric spaces  of noncompact type.   Several important aspects are studied by them in a series of publications \cite{O1,O2,O3,O4,O5}. One of the  impressive developments came in 1995s with the work of  Opdam \cite{O4}, where he introduced   a  remarkable family of   orthogonal polynomials ( the so called Opdam's nonsymmetric
polynomials ) as simultaneous eigenfunctions of  Cherednik operators. It contains in particular the presentation of the non-symmetric  hypergeometric functions where their investigations   become an interesting topics in  the theory of special functions
and in the  harmonic analysis.  In this paper, we focus on  the non-symmetric  hypergeometric function  associated
to root systems  of type $A$, for the purpose in finding   an explicit formula for it, as it is done
in the symmetric case \cite{B1, B2,Saw, Bro}.  The setting is   that    non-symmetric  hypergeometric function can be derived   from  the symmeric ones  via application of a suitable  polynomial of   Cherednik operators ( \cite{O5}, cor. 7.6  ).  This paper  deals with the case where the root system is of type $A_2$, using   Opdam's shifted operators  and  Cherednik operators, so the problem semble to be more robust for others $A_n$.
  \par In order to describe our approach let  us be more specific about  $A$-type hypergeometric function.
We assume that the reader is familiar with root systems and their basic properties. As a general reference, we mention Opdam    \cite{O4,O5}.
\par Let  $(e_1,e_2,...,e_n)$ be the standard
basis of $\mathbb{R}^n$ and   $\langle.,.\rangle$   be  the usual inner product for which this basis
is orthonormal. We denote by $\|\,.\,\|$ its Euclidean norm.  Let $\mathbb{V}$ be the hyperplane  orthogonal to the vector
 $e=e_1+...+e_n$. In  $\mathbb{V}$  we consider  the
root  system of type $A_{n-1}$ $$R=\{ e_i-e_j ,\; 1\leq i\neq j\leq n\}$$
with the subsystem of positives roots
 $$R_+=\{ e_i-e_j ,\; 1\leq i<j\leq n\}.$$
The associated Weyl group $W$ is  isomorphic to symmetric group  $S_n$,   permuting the $n$ coordinates.  We define the positive Weyl chamber
$$C=\{x\in \mathbb{V},\; x_1>x_2>...>x_n\}.$$
Denote $\pi_n$  the orthogonal projection  onto  $\mathbb{V}$, which is given by
$$\pi_n(x)=x-\frac{1}{n}\left(\sum_{j=1}^nx_j\right)e, \quad x\in \mathbb{R}^n.$$
 The cone of dominant weights   is the set
 $$P_+=\sum_{j=1}^{n-1}\mathbb{Z}_+\beta_j,\quad \beta_j=\pi_n(e_1+e_2+...+e_j).$$
 For fixed $k>0$,  the  Dunkl-Cherednik operators $T_\xi $, $\xi\in \mathbb{R}^n$, is defined by
\begin{equation}\label{Cherd}
   T_\xi^k =\partial_\xi +k\sum_{i<j}(\xi_i-\xi_j)\frac{1-s_{i,j}}{1-e^{^{x_j-x_i}}}-\langle \rho_k,\xi\rangle
   \end{equation}
where $\displaystyle{\rho_k=\frac{k}{2}\sum_{j=1}^n (n-2j+1)}e_j$ and  $s_{i,j}$  acts on functions of variables $(x_1,x_2,...,x_n)$ by interchanging the variables $x_i$ and $x_j$. For each $\lambda\in \mathbb{V}_\mathbb{C}$  ( the complexification of $\mathbb{V}$ ) there exists a unique holomorphic W-invariant function $F_k(\lambda, .)$ in a W-invariant tubular neighborhood of $\mathbb{V}$ in $\mathbb{V}_\mathbb{C}$ such that
 $$p(T_{\pi(e_1)}^k, ...T_{\pi(e_n)}^k)F_k(\lambda, .)  = p(\lambda)F_k(\lambda, .); \quad F(\lambda, 0) = 1, $$
 for  all symmetric  polynomial $p\in \mathbb{C}[X_1,..,X_n]$. In particular
 \begin{equation}\label{del}
   \Delta_k F_k(\lambda, .)=  \|\lambda\|^2\,F_k(\lambda, .)
 \end{equation}
  where $\Delta_k=\sum_{i=1}^n\left(T_{\pi(e_i)}\right)^2$ is  the Heckman-Opdam Laplacian. Note that the restriction of $\Delta_k$  to the set of W-invariant functions is the differential operator
$$L_k= \Delta+k\sum_{\alpha\in R^+}\coth \frac{\langle\alpha,x\rangle}{2} \;\partial_\alpha+\langle\rho_k , \rho_k\rangle$$
where $\Delta$ is the ordinary Laplace operator.\\
There exists a unique solution $G_k( \lambda,x)$ of the eigenvalue problem
 \begin{eqnarray}\label{FG}
  T_\xi(k)G_k(\lambda,.)=\langle\lambda,\xi \rangle G_k(\lambda,.),\quad \forall \xi\in \mathbb{R}^N, \qquad G_k(\lambda,0)=1.
\end{eqnarray}
holomorphic for all $\lambda$  and for $x$  in $ \mathbb{V}+ iU$ for a neighbourhood $U\subset\mathbb{V}$
of zero.
The function $G_k$ is the so-called   nonsymmetric Opdam's hypergeometric function. \\
If $x\in \mathbb{V}$ then
\begin{equation}\label{est}
 |G_k(\lambda,x)|\leq\sqrt{ n! }\,e^{\max _{w\in W}\langle Re(\lambda), w(x) \rangle}.
\end{equation}
Moreover,  the Heckman-Opdam hypergeometric function $F_k$  can be written as
 \begin{eqnarray}\label{FG}
F_k(\lambda,x)=\frac{1}{n!}\sum_{w\in W} G_k(\lambda,w.x).
\end{eqnarray}
In other words, for $\lambda$ satisfying $\lambda_i-\lambda_j\neq0; \pm k$  we have
\begin{equation}\label{q}
  G_k(\lambda,x)= D_q F_k(\lambda,x),\qquad x\in \mathbb{V}
\end{equation}
where
 $$D_q=\prod_{1\leq i<j\leq n}\left(1-\frac{k}{\lambda_i-\lambda_j}\right)^{-1}
 \prod_{ w\in W, w\neq id}\left(\frac{T_\xi-\langle w(\lambda),\xi\rangle}{\langle \lambda ,\xi\rangle-\langle w(\lambda),\xi\rangle}\right)$$
and  $\xi$ is any element in $\mathbb{V}$ satisfying  $\langle \lambda ,\xi\rangle-\langle w(\lambda),\xi\rangle\neq0$ for all $w\neq id $.
\\However,  (\ref{q}) is far from being applied to  find an expansion of  $G_k$  when an  explicit formula of $F_k$ is given, so
the polynomial $q$ has   degree $n!-1$. It would therefore be desirable to  get   another polynomial that is of suitable  low degree,
this  will be described  in section 3 when the  root system is of type $A_2$.
\section{An integral formula for Heckman-Opdam  hypergeometric function of type A}
In   \cite{B1}   an  explicit and recursive formula on the dimension $n$ for the A-type    Heckman-Opdam's hypergeometric function is obtained as a consequence of  similar formula for Jack polynomials. We have for  $\lambda\in P_+$  and  $x\in C$
\begin{eqnarray*}
 && F_{k,n}(\lambda+\rho_{k,n} ,x)=\\&&\frac{ c_n(\lambda+\rho_{k,n})}{ c_{n-1}(\overline{\lambda}+\rho_{k,n-1})U_n(\widetilde{\lambda})V_n(x)^{2k-1} }
\int_{x_2}^{x_1}...\int_{x_n}^{x_{n-1}} F_{k,n-1} \Big(\pi_{n-1}(\overline{\lambda})+\rho_{k,n-1}^k,\pi_{n-1}(\nu)\Big)\;\\&&
\qquad\qquad\qquad\qquad\qquad \qquad\qquad \qquad\qquad\qquad e^{|\nu|\left(1+\frac{|\overline{\lambda}|}{n-1} \right)} V_{n-1}( \nu)\;W_k(x, \nu)\;d\nu
\end{eqnarray*}
with the following notations
\begin{eqnarray*}
&& c_n(\lambda)=\prod_{\alpha\in R+}\frac{\Gamma(\langle\lambda, \alpha\rangle)\Gamma(\langle \rho_{k,n}, \alpha\rangle+k)}{\Gamma(\langle\lambda,\alpha\rangle +k )\Gamma(\langle \rho_{k,n}, \alpha\rangle)}, \qquad U_n(\lambda)=\prod_{j=1}^{n-1}\beta(\lambda_j+(n-j)k,k)\\&& \widetilde{\lambda}=(\lambda_1-\lambda_n,\lambda_2-\lambda_n,...,\lambda_{n-1}-\lambda_n,0),
\qquad\overline{\lambda}=(\lambda_1-\lambda_n,\lambda_2-\lambda_n,...,\lambda_{n-1}-\lambda_n),\\&&\\&&|\nu|=\nu_1+\nu_2+...+\nu_{n-1};  \qquad \qquad
V_n(x)=\prod_{1\leq i<j\leq n}(e^{x_i}-e^{x_j}),\\&&    W_k(x,\nu)=\prod_{ 1\leq i\leq n;\;1\leq j\leq n-1}|e^{x_i}-e^{\nu_j}|^{k-1}.
\end{eqnarray*}
 It can be simplified  to
\begin{eqnarray}\label{hy}
\nonumber F_{k,n}(\lambda+\rho_{k,n} ,x)&=&\frac{\Gamma(nk)}{ V_n(x)^{2k-1}\Gamma(k)^n}
\int_{x_2}^{x_1}...\int_{x_n}^{x_{n-1}}F_{k;n-1} \Big(\pi_{n-1}(\overline{\lambda})+\rho_{k,n-1}^k,\pi_{n-1}(\nu)\Big)\;\\&&
\qquad\qquad\qquad\qquad\qquad e^{|\nu|\left(1+\frac{|\overline{\lambda}|}{n-1} \right)} V_{n-1}( \nu)\;W_k(x, \nu)\;d\nu
\end{eqnarray}
 By   analytic continuation, according to Carlson's theorem ( see \cite{T}, p. 186 ), this formula    is still valid for all $\lambda\in \mathbb{V}_\mathbb{C}$.
  Indeed,  for     $x\in C$   define the functions of vaiable $\lambda\in \mathbb{V}_{\mathbb{C}}$
\begin{eqnarray*}
&&H_1(\lambda)=e^{-\langle \lambda,x\rangle}F_{k,n}(\lambda+\rho_{k,n},x)
\\ &&H_2(\lambda)= \frac{\Gamma(nk)}{\Gamma(k)^n V_n(x)^{2k-1}}
\int_{x_2}^{x_1}...\int_{x_n}^{x_{n-1}}e^{-\langle\lambda,x\rangle+|\nu||\overline{\lambda}|/(N-1)}\\&&\qquad\qquad \quad\qquad
F_{k,n-1}^k\Big(\pi_{n-1}(\overline{\lambda})+\rho_{k,n-1} ,\pi_{n-1}(\nu)\Big)  e^{|\nu|} V_{n-1}( \nu)\;W_k(x,e^\nu)\;
d\nu
\end{eqnarray*}
  We have from (\ref{est})
\begin{eqnarray*}
|H_1(\lambda)|\leq \sqrt{ n!}\;e^{\langle\rho_{k,n},x\rangle}
\end{eqnarray*}
for all $\lambda\in \mathbb{V}_\mathbb{C}$, $Re(\lambda)\in C$ and $x\in C$. In order to estimate $H_2$ we note  beforehand the following fact,
 for $\nu=(\nu_1,\nu_2,...,\nu_{n-1})$ and $x\in C$ with $x_{i+1}\leq \nu_i\leq x_i$, if we consider
$\nu^*= (\nu_1,\nu_2,...,\nu_{n-1},\nu_n)\in \mathbb{V}$ then we have  $ \nu^*\preceq x$ where $\preceq $ denotes the partial order on $\mathbb{V} $ associated to the dual  cone
 $\sum_{i=1}^{n-1}\mathbb{R}_+(e_i-e_{i+1})$, which implies  that $\langle Re(\lambda), \nu^*\rangle \leq \langle Re(\lambda), x\rangle$,
 for   $\lambda\in \mathbb{V}_\mathbb{C}$ with  $Re(\lambda)\in C$. Applying this fact and (\ref{est}) it follows that
 \begin{eqnarray*}
  &&\left|e^{-\langle   \lambda,\;x\rangle+|\nu||\overline{\lambda}|/(n-1))}
F_{k,n-1}\Big(\pi_{n-1}(\overline{\lambda})+\rho_{k,n-1},\pi_{n-1}(\nu)\Big)
\right|\\&&\qquad\qquad\qquad\qquad\qquad\qquad\qquad\qquad\leq \sqrt{(n-1)!} \;e^{-\langle Re(\lambda),\, x- \nu^*\rangle+ \langle \rho_{k,n-1},\,\pi_{n-1}(\nu) \rangle}
\\&&\qquad\qquad\qquad\qquad\qquad\qquad\qquad\qquad \leq \sqrt{(n-1)!} \;e^{  \langle \rho_{k,n-1},\,\pi_{n-1}(\nu) \rangle}
\end{eqnarray*}
Hence  $H_1$ and $H_2$ are  Holomorphic functions, bounded  for $Re(\lambda)\in C$
and coincide on $P_+\simeq\mathbb{Z}_+^{n-1}$, the Carleson's Theorem yields  $H_1(\lambda)=H_1(\lambda)$ for all $\lambda\in \mathbb{V}_\mathbb{C} $, $Re(\lambda)\in C$
and thus for all $\lambda\in \mathbb{V}_\mathbb{C} $ by analytic continuation.
 \par Now, using the fact that   $\pi_{n-1}(\overline{\rho_{k,n}})=\rho_{k,n-1}^k$ and $|\overline{\rho_{k,n}}|=kn(n-1)/2$,
we state the following final form of our recursive formula.
\begin{thm} For all $\lambda\in \mathbb{V}_\mathbb{C}$ and $x\in C$.
\begin{eqnarray}\label{hy1}
\nonumber F_{k,n}(\lambda,x)=\qquad\qquad\qquad\qquad\qquad\qquad\qquad\qquad\qquad\qquad
\qquad\qquad\\ \frac{\Gamma(nk)}{\Gamma(k)^n V_n(x)^{2k-1}}
\int_{x_2}^{x_1}...\int_{x_n}^{x_{n-1}}F_{k,n-1} \Big(\pi_{n-1}(\overline{\lambda}),\pi_{n-1}(\nu)\Big)\;
e^{|\nu|\left(1-nk/2+|\overline{\lambda}|/(n-1)\right)}\nonumber \\ V_{n-1}( \nu )\;W_k(x, \nu)\;
d\nu\qquad\qquad\qquad\qquad\quad
\end{eqnarray}
\end{thm}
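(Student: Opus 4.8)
The plan is to obtain (\ref{hy1}) from (\ref{hy}) by a single shift of the spectral parameter, since all the analytic content is already in place: the recursion for Jack polynomials of \cite{B1} behind (\ref{hy}), and the extension of (\ref{hy}) from $\lambda\in P_+$ to all $\lambda\in\mathbb{V}_\mathbb{C}$ via Carlson's theorem, have both been carried out above. Because $\rho_{k,n}\in\mathbb{V}$ (its coordinate sum vanishes), the translation $\lambda\mapsto\lambda-\rho_{k,n}$ maps $\mathbb{V}_\mathbb{C}$ into itself, and both sides of (\ref{hy}) are holomorphic in $\lambda$; so I would simply replace $\lambda$ by $\lambda-\rho_{k,n}$ everywhere in (\ref{hy}). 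On the left this gives $F_{k,n}\bigl((\lambda-\rho_{k,n})+\rho_{k,n},\,x\bigr)=F_{k,n}(\lambda,x)$, which is the left-hand side of (\ref{hy1}); it then remains only to rewrite the integrand.

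For this I would first note that $\mu\mapsto\overline{\mu}=(\mu_1-\mu_n,\dots,\mu_{n-1}-\mu_n)$ is linear, so $\overline{\lambda-\rho_{k,n}}=\overline{\lambda}-\overline{\rho_{k,n}}$, whence $|\overline{\lambda-\rho_{k,n}}|=|\overline{\lambda}|-|\overline{\rho_{k,n}}|$ and $\pi_{n-1}(\overline{\lambda-\rho_{k,n}})=\pi_{n-1}(\overline{\lambda})-\pi_{n-1}(\overline{\rho_{k,n}})$. Then I would use the two identities recorded just before the statement, namely $\pi_{n-1}(\overline{\rho_{k,n}})=\rho_{k,n-1}$ and $|\overline{\rho_{k,n}}|=kn(n-1)/2$; both are elementary, since $(\rho_{k,n})_j=\frac{k}{2}(n-2j+1)$ gives $\overline{\rho_{k,n}}$ the $j$-th entry $k(n-j)$ for $1\le j\le n-1$, whose sum is $kn(n-1)/2$ and whose mean is therefore $kn/2$, and subtracting that mean from each entry yields the vector with $j$-th entry $\frac{k}{2}(n-2j)$, which is exactly $\rho_{k,n-1}$.

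Feeding these into the shifted form of (\ref{hy}): the first argument of $F_{k,n-1}$ becomes $\pi_{n-1}(\overline{\lambda-\rho_{k,n}})+\rho_{k,n-1}=\pi_{n-1}(\overline{\lambda})-\rho_{k,n-1}+\rho_{k,n-1}=\pi_{n-1}(\overline{\lambda})$, the exponent of the exponential factor becomes
\[
1+\frac{|\overline{\lambda-\rho_{k,n}}|}{n-1}=1+\frac{|\overline{\lambda}|-kn(n-1)/2}{n-1}=1-\frac{nk}{2}+\frac{|\overline{\lambda}|}{n-1},
\]
and the remaining pieces $\Gamma(nk)/(\Gamma(k)^nV_n(x)^{2k-1})$, $V_{n-1}(\nu)$ and $W_k(x,\nu)$ carry no $\lambda$ and are untouched; the result is precisely (\ref{hy1}). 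There is no genuine analytic difficulty here — the only thing that demands care is this bookkeeping of the $\rho$-shift, in particular the cancellation of $-\rho_{k,n-1}$ against the $+\rho_{k,n-1}$ already sitting inside $F_{k,n-1}$ and the emergence of the correction $-nk/2=-|\overline{\rho_{k,n}}|/(n-1)$ in the exponential, both of which follow at once from the two identities for $\overline{\rho_{k,n}}$.
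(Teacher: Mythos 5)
Your proposal is correct and is exactly the argument the paper uses: the theorem is obtained from (\ref{hy}) by the shift $\lambda\mapsto\lambda-\rho_{k,n}$ together with the two identities $\pi_{n-1}(\overline{\rho_{k,n}})=\rho_{k,n-1}$ and $|\overline{\rho_{k,n}}|=kn(n-1)/2$, which the paper invokes in the sentence immediately preceding the statement. Your explicit verification of those two identities and of the exponent bookkeeping is a welcome addition, since the paper leaves them unchecked.
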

\par
 In the rank-one case, which corresponds to take $n = 2$ and $\mathbb{V}=\mathbb{R}(e_1-e_2)$, the formula  (\ref{hy1}) becomes
\begin{eqnarray*}
F_{k,2}(\lambda,x)&=& \frac{\Gamma(2k)}{\Gamma(k)^2(2\sinh x_1)^{2k-1}}\int_{-x_1}^{x_1} e^{\nu(1-k+2\lambda_1)}
(e^{x_1}-e^{\nu})^{k-1}(e^{\nu}-e^{-x_1})^{k-1}d\nu
\\&=& \frac{\Gamma(2k)}{2^k\Gamma(k)^2(\sinh x_1)^{2k-1}}\int_{-x_1}^{x_1} e^{2\nu\lambda_1}
(\cosh x_1-\cosh\nu)^{k-1}d\nu\\&=&\varphi^{k-1/2,-1/2}_{2i\lambda_1}(x_1)=\;_2F_1\left(\frac{k}{2}-\lambda_1,
\;\frac{k}{2}+\lambda_1,\;k+\frac{1}{2},\;-\sinh^2x_1\right)
\end{eqnarray*}
where  $\varphi^{k-1/2,-1/2}_{2i\lambda_1}$  is a  Jacobi function see (1.4), (3.4) and (3.5) of \cite{koo1}. We recall here various
facts about the Jacobi function  $\varphi^{k-1/2,-1/2}_{i\eta}$  that we shall need later, we  refer to \cite{koo1,koo2}.
\begin{eqnarray}
&&\varphi^{k-1/2,-1/2}_{i\eta}(2t)=\varphi^{k-1/2,k-1/2}_{2i\eta}(t)\label{fi1}\qquad\qquad\qquad\qquad\qquad\qquad\qquad \qquad\qquad\qquad\\
&&\left(\varphi^{k-1/2,-1/2}_{i\eta}\right)'(t)=\frac{1}{(2k+1)}(\eta^2-k^2)\sinh(t)\varphi^{k+1/2,-1/2}_{i\eta}(t)\label{fi2}\\
&&\left(\varphi^{k-1/2,-1/2}_{i\eta}\right)''(t)+2k\coth(t)\left(\varphi^{k-1/2,-1/2}_{i\eta}\right)'(t)=(\eta^2-k^2)\varphi^{k+1/2,-1/2}_{i\eta}(t)\label{fi3}
\end{eqnarray}
\par In the rank-two case, where  $n = 3$ which is our subject in the next section,  Heckman-Opdam's hypergeometric function has the following integral
representation ( we omit here the dependance on $n$ )
\begin{eqnarray}\label{n3}
\nonumber F_{k}(\lambda,x)=&&\frac{\Gamma(3k)}{\Gamma(k)^3}V(x)^{-2k+1}\int_{x_2}^{x_1}\int_{x_3}^{x_{2}}
e^{ (1-\frac{3}{2}(\lambda_3 +k))(\nu_1+\nu_2)}\;\varphi^{k-\frac{1}{2},-\frac{1}{2}}_{i(\lambda_1-\lambda2)}\left(\frac{\nu_1-\nu_2}{2}\right)
\\&&\qquad\qquad\qquad\qquad\qquad\quad(e^{\nu_1}-e^{\nu_2})W_k(\nu,x)\;d\nu.
\end{eqnarray}
where
$$W_k(\nu,x)=\Big((e^{x_1}-e^{\nu_1})(e^{x_1}-e^{\nu_2})(e^{x_2}-e^{\nu_2})(e^{\nu_1}-e^{x_2})(e^{\nu_1}-e^{x_3})(e^{\nu_2}-e^{x_3}))\Big)^{k-1}$$
and
$$V(x)=(e^{x_1}-e^{x_2})(e^{x_1}-e^{x_3})(e^{x_2}-e^{x_3}) .$$
In order to find an expression  for    $F_{k}$ of Laplace type, we write
\begin{eqnarray*}
F_{k}(\lambda,x)&=&\frac{\Gamma(3k)}{4\Gamma(k)^3\left(\prod_{1\leq i<j\leq3}\sinh\left(\frac{x_i-x_j}{2}\right)\right)^{2k-1}}\int_{x_2}^{x_1}\int_{x_3}^{x_{2}}
e^{-3(\nu_1+\nu_2)\lambda_3/2 }
\\&&\varphi^{k-\frac{1}{2},-\frac{1}{2}}_{i(\lambda_1-\lambda2)}\left(\frac{\nu_1-\nu_2}{2}\right)\sinh\left(\frac{\nu_1-\nu_2}{2}\right)
\left(\prod_{1\leq i\leq2;\;1\leq j\leq 3}\sinh\left(\frac{|\nu_i-x_j|}{2}\right)\right)^{k-1}
d\nu.
\end{eqnarray*}
With  the
 change of variables  $$y_1=\frac{\nu_1+\nu_2}{2},\quad t=\frac{\nu_1-\nu_2}{2},$$ we have that
\begin{eqnarray*}
F_{k}(\lambda,x)&=&\frac{\Gamma(3k)}{2^{3k-2}\Gamma(k)^3\left(\prod_{1\leq i<j\leq3}\sinh(\frac{x_i-x_j}{2})\right)^{2k-1}}
\int_{\mathbb{R}^2}
e^{-3\lambda_3y_1 }\varphi^{k-\frac{1}{2},-\frac{1}{2}}_{i(\lambda_1-\lambda_2)}(t)\sinh(t)
\\&&\Big((\cosh(x_1-y_1)-\cosh t)(\cosh t - \cosh(x_2-y_1))(\cosh(x_3-y_1)-\cosh t)\Big)^{k-1}\\&&
 \; \chi_{[x_2,x_1]}(y_1+t)\;\chi_{[x_3,x_2]}(y_1-t) dy_1dt.
\end{eqnarray*}
Now inserting
\begin{eqnarray}\label{2}\nonumber
\varphi^{k-\frac{1}{2},-\frac{1}{2}}_{i(\lambda_1-\lambda_2)}(t)&=&\frac{\Gamma(2k)}{2^k\Gamma(k)^2(\sinh t)^{2k-1}}\int_{\mathbb{R}} e^{y_2(\lambda_1-\lambda_2)}
(\cosh (t)-\cosh y_2)^{k-1}\chi_{[-1,1]}\left(\frac{y_2}{t}\right)dy_2,
\end{eqnarray}
with the use of    Fubini's Theorem  and the fact that
\begin{eqnarray*}
\chi_{[-1,1]} (\frac{y_2}{t}) \chi_{[x_2,x_1]}(y_1+t)\;\chi_{[x_3,x_2]}(y_1-t)=
\chi_{\max(|y_2|,|y_1-x_2|)\leq t\leq \min(y_1-x_3, x_1-y_1)},
\end{eqnarray*}
if follows that
\begin{eqnarray}\label{14}
F_{k}(\lambda,x)= \int_{\mathbb{R} }\int_{\mathbb{R} }
 e^{3(\lambda_1+\lambda_2)y_1+ (\lambda_1-\lambda_2)y_2} R_k(x,y_1,y_2)dy_1dy_2
\end{eqnarray}
where
\begin{eqnarray*}
&&R_k(x,y_1,y_2)=\frac{\Gamma(2k)\Gamma(3k)}{2^{4k-2}\Gamma(k)^{5}\left(\prod_{1\leq i<j\leq3}\sinh\left(\frac{x_i-x_j}{2}\right)\right)^{2k-1}}
\\&&\qquad\int_{\max(|y_2|,|y_1-x_2|)}^{\min(y_1-x_3,x_1-y_1)}\left( \frac{\cosh t-\cosh y_1}{\sinh ^2t}\right)^{k-1}
\Big(\prod_{i=1}^3|\cosh(x_i-y_1)-\cosh(t)|\Big)^{k-1}dt,
\end{eqnarray*}
if $\max(|y_2|,|y_1-x_2|)\leq \min(y_1-x_3, x_1-y_1)$ and $R_k(x,y_1,y_2)=0$, otherwise.
 We should note here that
 the condition  $$\max(|y_2|,|y_1-x_2|)\leq \min(y_1-x_3, x_1-y_1)$$ is equivalent to
$$x_3\leq y_1\pm y_2,-2y_1\leq x_1$$
an thus  equivalents to $y=( y_1+ y_2, y_1- y_2,-2y_1)\in co(x)$, the convex hull of the orbit $W.x$, see proposition 3.6 of \cite{B1}. Also, we have
$y=y_1 \sqrt{6}\,\varepsilon_1+y_2\sqrt{2}\,\varepsilon_2$ in the orthonormal basis of $\mathbb{V}$
  $$\varepsilon_1=(e_1+e_2-2e_3)/\sqrt{6},\quad \varepsilon_2=(e_1-e_2)/\sqrt{2}.$$
Making  the change of variables $ z_1= \sqrt{6}\,y_1,\; z_2=\sqrt{2}\, y_2$ in  the formula (\ref{14}) and identify $\mathbb{R}^2$ with $\mathbb{V}$ via the basis $(\varepsilon_1,\varepsilon_2)$ we  finally write
\begin{eqnarray}\label{15}
F_{k}(\lambda,x)= \int_{co(x)}
 e^{ \langle\lambda,z\rangle} \mathcal{N}_k(x,z)dz.
\end{eqnarray}
where $\mathcal{N}_k(x,z)= R_k\left(x, z_1/\sqrt{6},z_2/\sqrt{2}\right)/\sqrt{12}$.
\par Next, for   $f\in C^\infty(\mathbb{V})$, $W$- invariant,  we define $V_k(f)$ the $W$- invariant function
 on $\mathbb{V}=\mathbb{R}\varepsilon_1\oplus \mathbb{R}\varepsilon_2$ by
\begin{equation*}
  V_k(f)(x)=\int_{co(x)}
 f(z)  \mathcal{N}_k(x,z)dz; \qquad x\in C.
\end{equation*}
  \begin{prop}
 For $f\in C_c^\infty(\mathbb{V})$ and $x\in C$, we have the  following intertwining property
$$  \quad \Delta_k(V_k(f))= V_k(\Delta(f)).$$
\end{prop}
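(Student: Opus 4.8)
My plan is to deduce the intertwining relation from the eigenvalue equation (\ref{del}) through the Laplace--type representation (\ref{15}), and then to bootstrap from pure exponentials to arbitrary compactly supported functions by Fourier synthesis on $\mathbb{V}$.

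First I would record the identity for a single exponential. For $\lambda\in\mathbb{V}_{\mathbb{C}}$ put $e_\lambda(z)=e^{\langle\lambda,z\rangle}$; then (\ref{15}) says exactly that $V_k(e_\lambda)=F_k(\lambda,\cdot)$ on $C$ (the kernel $\mathcal{N}_k(x,\cdot)$ being independent of $\lambda$). Since $F_k(\lambda,\cdot)$ is $W$--invariant in $x$, the operator $\Delta_k$ acts on it through its $W$--invariant restriction $L_k$, so (\ref{del}) would give
$$\Delta_k\bigl(V_k(e_\lambda)\bigr)=\Delta_kF_k(\lambda,\cdot)=\|\lambda\|^2F_k(\lambda,\cdot)=\|\lambda\|^2V_k(e_\lambda)=V_k\bigl(\|\lambda\|^2e_\lambda\bigr)=V_k\bigl(\Delta e_\lambda\bigr),$$
the last step because $\Delta e_\lambda=\langle\lambda,\lambda\rangle e_\lambda=\|\lambda\|^2e_\lambda$. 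Thus the proposition holds verbatim for $f=e_\lambda$, any $\lambda\in\mathbb{V}_\mathbb{C}$; in particular for $\lambda=i\xi$, $\xi\in\mathbb{V}$, where $V_k(e_{i\xi})=F_k(i\xi,\cdot)$ and $\Delta_kF_k(i\xi,\cdot)=-\|\xi\|^2F_k(i\xi,\cdot)$.

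Next I would pass to a general $W$--invariant $f\in C_c^\infty(\mathbb{V})$ by Fourier inversion on $\mathbb{V}$: its transform $\widehat f$ is Schwartz and $f(z)=(2\pi)^{-2}\int_{\mathbb{V}}\widehat f(\xi)e^{i\langle\xi,z\rangle}\,d\xi$. Since $\mathcal{N}_k(x,\cdot)$ is integrable over the compact set $co(x)$ --- which is already built into the derivation of (\ref{15}), the singularities of $R_k$ being of integrable order $k-1>-1$ --- Fubini's theorem would give, for $x\in C$,
$$V_k(f)(x)=\int_{co(x)}f(z)\mathcal{N}_k(x,z)\,dz=(2\pi)^{-2}\int_{\mathbb{V}}\widehat f(\xi)\Bigl(\int_{co(x)}e^{i\langle\xi,z\rangle}\mathcal{N}_k(x,z)\,dz\Bigr)d\xi=(2\pi)^{-2}\int_{\mathbb{V}}\widehat f(\xi)F_k(i\xi,x)\,d\xi.$$
Applying $\Delta_k=L_k$ in $x$ and differentiating under the integral, the first step yields $\Delta_k(V_k(f))(x)=(2\pi)^{-2}\int_{\mathbb{V}}\widehat f(\xi)(-\|\xi\|^2)F_k(i\xi,x)\,d\xi$; and since $\Delta f\in C_c^\infty(\mathbb{V})$ with $\widehat{\Delta f}(\xi)=-\|\xi\|^2\widehat f(\xi)$, the same Fubini step gives $V_k(\Delta f)(x)=(2\pi)^{-2}\int_{\mathbb{V}}\widehat f(\xi)(-\|\xi\|^2)F_k(i\xi,x)\,d\xi$. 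The two coincide, which is the assertion ($W$--invariance of $f$ is in fact not needed in this argument).

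The hard part will be justifying the differentiation under the integral sign when $L_k$ hits $\int_{\mathbb{V}}\widehat f(\xi)F_k(i\xi,x)\,d\xi$: this needs a bound on the $x$--derivatives of $F_k(i\xi,\cdot)$ up to order two, locally uniform in $x\in C$ and of polynomial growth in $\xi$. From (\ref{est}) one already has $|F_k(i\xi,x)|\le\sqrt{n!}$ for real $x,\xi$; coupling this with the holomorphy of $F_k(i\xi,\cdot)$ on a fixed tube $\mathbb{V}+iU$ and the attendant exponential bound in the imaginary direction, Cauchy estimates on a polydisc of fixed radius would supply the needed control, the $\xi$--dependence being swallowed by the rapid decay of $\widehat f$. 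Failing that, one can argue distributionally --- pair the identity with $\phi\in C_c^\infty(C)$ and move $L_k$ onto $\phi$ by self--adjointness --- which removes pointwise differentiation entirely. Everything else is the elementary algebra of the first two steps, and since only (\ref{15}) and (\ref{del}) are used, the argument is indifferent to the rank.
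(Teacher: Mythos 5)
Your proposal is correct and follows essentially the same route as the paper: both write $V_k(f)(x)=\int\widehat f(\xi)F_k(i\xi,x)\,d\xi$ by Fourier inversion and Fubini, apply $\Delta_k$ under the integral using the eigenvalue equation (\ref{del}), and identify $-\|\xi\|^2\widehat f$ with $\widehat{\Delta f}$. The only difference is bookkeeping: the paper justifies differentiating under the integral by citing the derivative estimates for $F_k$ in Corollary 6.2 of \cite{O5}, where you propose Cauchy estimates or a distributional argument instead.
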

\begin{proof}
   By inversion formula for  Fourier transform and Fubini's Theorem
\begin{eqnarray}\label{VV}
 V_k(f)(x)=\int_{ \mathbb{R}^2} \int_{co(x)}
 \widehat{f}(\xi) e^{i\langle \xi,z\rangle}   \mathcal{N}_k(x,z)dzd\xi
= \int_{ \mathbb{R}^2}
 \widehat{f}(\xi) F_{k}(i\xi,x)d\xi.
 \end{eqnarray}
 Here  we define the  Fourier transform of $f$ by
 $$ \widehat{f}(\xi)=\frac{1}{2\pi}\int_{ \mathbb{R}^2}
 f(t) e^{-i\langle \xi,t\rangle}dt$$
From a general estimates of Heckman-Opdam's hypergeometric function ( see for example corollary 6.2 in \cite{O5}), the  last integral of (\ref{VV})  is a $C^\infty$ as a function of $x$ and  then by (\ref{del}) one has
\begin{eqnarray*}
 \Delta_k(V_k(f))(x) =- \int_{\mathbb{R}^2}\|\xi\|^2
 \widehat{f}(\xi) F_{k}(i\xi,x)d\xi=\int_{\mathbb{R}^2}\widehat{\Delta f}(\xi) F_{k}(i\xi,x)d\xi=V_k(\Delta(f)(x),
\end{eqnarray*}
which proves the desired fact.
  \end{proof}
 \section{ Nonsymmetric Opdam's  hypergeometric function of   type $A_2$}
We  begin with some backgrounds from  Heckman-Opdam theory of hypergeometric  functions  associated to a root system $R$ with Weyl group $W$ of a finite-dimensional  vector space $ \mathfrak{a}$ , we refer   to  \cite{O4,O5}  for a more detailed treatment. For a regular weight $\lambda\in P_+$
 (the set of dominant weights)
    we denote  by  $P_k (\lambda,.)$ the   Hekman-Opdam  Jacobi polynomial and by $E_k(\lambda,.)$ the non symmetric  Opdam  polynomial. In the following  we collect  some   properties  and  relationships.
\begin{itemize}
\item [(i)] $\displaystyle{P_k(\lambda,x)= \sum_{w\in W}E_k(\lambda,wx)}$
  \item [(ii)] $\displaystyle{\sum_{w\in W} det(w) E_k(\lambda+\delta,wx)=V(x) P_{k+1}(\lambda ,x)}$, where $\displaystyle{\delta=\frac{1}{2}\sum_{\alpha\in R^+} \alpha}$ and
      \\ $\displaystyle{V(x) =\prod_{\alpha\in R^+} (e^{\alpha/2}-e^{-\alpha/2}) }$
\item [(iii)] $\displaystyle{P_k(\lambda,0)= \frac{c_k(\rho_k)}{c_k(\lambda+\rho_k)}}$, where
$\displaystyle{\rho_k(\lambda)=\frac{1}{2}\sum_{\alpha\in R+}k_\alpha}\alpha$,
$\quad\displaystyle{c_k(\lambda)=\prod_{\alpha\in R+}\frac{\Gamma(\langle\lambda,\check{\alpha} \rangle)}{\Gamma(\langle\lambda,\check{\alpha}\rangle +k_{\alpha})}}$
and $\check{\alpha}=2\alpha/|\alpha|^2$.
 \item [(iv)] $P_k(\lambda ,x)= P_k(\lambda ,0)F_k(\lambda+\rho_k,x)$
\item [(v)]$\displaystyle{E_k(\lambda,x)= \frac{P_k(\lambda, 0)}{|W|}\;G_k(\lambda+\rho_k,x)}$.
\end{itemize}
 From these facts we give  the following consequence.
\begin{prop}\label{k+1}
For all $\lambda, x\in \mathfrak{a}$,
\begin{equation}\label{shift}
 \sum_{w\in W} det(w) G_k (\lambda,w.x)=
d_k(\lambda )V(x) F_{k+1}(\lambda,x),
\end{equation}
  where,
$$d_k(\lambda)=\; |W|\; \frac{c_{k+1}(\rho_{k+1} )}{c_k(\rho_k)}\;\frac{c_k(\lambda)}{c_{k+1}(\lambda  )}.$$
\end{prop}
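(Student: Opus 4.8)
The plan is to obtain (\ref{shift}) purely from the five polynomial relations (i)--(v) collected above, reading them for both parameters $k$ and $k+1$, and then passing to hypergeometric functions by analytic continuation in $\lambda$. The starting point is the shift identity (ii); everything else is substitution and bookkeeping.

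First I would rewrite the left-hand side of (ii) in terms of the nonsymmetric function $G_k$. Applying (v) with the weight $\lambda+\delta$ gives, for each $w\in W$, $E_k(\lambda+\delta,wx)=\frac{P_k(\lambda+\delta,0)}{|W|}\,G_k(\lambda+\delta+\rho_k,wx)$, so (ii) becomes
\[\sum_{w\in W}\det(w)\,G_k(\lambda+\delta+\rho_k,wx)=\frac{|W|}{P_k(\lambda+\delta,0)}\,V(x)\,P_{k+1}(\lambda,x).\]
Invoking next (iv) at parameter $k+1$, namely $P_{k+1}(\lambda,x)=P_{k+1}(\lambda,0)\,F_{k+1}(\lambda+\rho_{k+1},x)$, this turns into
\[\sum_{w\in W}\det(w)\,G_k(\lambda+\delta+\rho_k,wx)=\frac{|W|\,P_{k+1}(\lambda,0)}{P_k(\lambda+\delta,0)}\,V(x)\,F_{k+1}(\lambda+\rho_{k+1},x).\]

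The second ingredient is the elementary $\rho$-identity $\rho_k+\delta=\rho_{k+1}$, immediate from $\rho_k=\frac12\sum_{\alpha\in R^+}k_\alpha\alpha$ and $\delta=\frac12\sum_{\alpha\in R^+}\alpha$. Hence the argument of $G_k$ above equals $\lambda+\rho_{k+1}$, and substituting $\lambda\mapsto\lambda-\rho_{k+1}$ (so that $\lambda-\rho_{k+1}+\delta=\lambda-\rho_k$) gives
\[\sum_{w\in W}\det(w)\,G_k(\lambda,wx)=\frac{|W|\,P_{k+1}(\lambda-\rho_{k+1},0)}{P_k(\lambda-\rho_k,0)}\,V(x)\,F_{k+1}(\lambda,x).\]
Finally I would evaluate the scalar prefactor with (iii): $P_{k+1}(\lambda-\rho_{k+1},0)=c_{k+1}(\rho_{k+1})/c_{k+1}(\lambda)$ and $P_k(\lambda-\rho_k,0)=c_k(\rho_k)/c_k(\lambda)$, so the prefactor collapses to $|W|\,\frac{c_{k+1}(\rho_{k+1})}{c_k(\rho_k)}\,\frac{c_k(\lambda)}{c_{k+1}(\lambda)}=d_k(\lambda)$, which is precisely the asserted formula.

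This chain of equalities is valid a priori only for $\lambda$ with $\lambda-\rho_{k+1}$ a regular dominant weight, that being the range in which (ii) holds. To reach all $\lambda\in\mathfrak a$ (indeed all $\lambda\in\mathfrak a_{\mathbb C}$) I would then conclude by analytic continuation in $\lambda$ — of the same type as used in Section 2, via Carlson's theorem — using the holomorphy of $\lambda\mapsto G_k(\lambda,\cdot)$ and $\lambda\mapsto F_{k+1}(\lambda,\cdot)$ and the meromorphy of $d_k$; the only mild point is to check that both sides extend across the possible zeros and poles of $d_k$, which is transparent from the explicit $c$-function quotients. The single genuine input is the shift identity (ii); I therefore do not expect a serious obstacle beyond carefully tracking the argument shifts and justifying the continuation step.
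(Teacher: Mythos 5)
Your proposal is correct and follows essentially the same route as the paper: substitute (v) into (ii), apply (iv) at parameter $k+1$, use $\rho_k+\delta=\rho_{k+1}$, evaluate the scalar prefactor with (iii) to recognize $d_k(\lambda)$, and extend from regular dominant weights to all $\lambda$ by Carlson's theorem. The paper's proof is only a sketch of this chain, so your write-up simply supplies the bookkeeping it leaves implicit.
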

\begin{proof}
This is closely related to $(ii)$, where   by using the above relations one can write   for regular $\lambda\in P_+$,
$$  \sum_{w\in W} det(w) G_k(\lambda+\rho_{k+1},x)=d_k(\lambda+\rho_{k+1})V(x) F_{k+1}(\lambda+\rho_{k+1},x),$$
since we have $\rho_{k}+\delta=\rho_{k+1}$.
This identity  can be extended  for all $\lambda\in \mathfrak{a}$ via analytic continuation by means of Carlson's theorem.

\end{proof}
\par
Let us return now to the space $\mathbb{V}$, for $n=3$ and investigate (\ref{shift}). In this case
we have
$$d_k(\lambda)=\frac{1}{(2k+1)(3k+1)(3k+2)}\;(\lambda_1-\lambda_2+k)(\lambda_2-\lambda_3+k)(\lambda_1-\lambda_3+k).$$
We   introduce the antisymmetric function
\begin{equation}\label{F*}
 F^*_k(\lambda,x)=\frac{1}{6}\sum_{w\in S_3} det(w) G_k(\lambda,x)=\frac{d_k(\lambda)}{6}V(x) F_{k+1}(\lambda,x)
\end{equation}
and   the following notations:
 \begin{eqnarray*}
I_k(\lambda,\nu)&=&\varphi^{k-\frac{1}{2},-\frac{1}{2}}_{i(\lambda_1-\lambda2)}\left(\frac{\nu_1-\nu_2}{2}\right)
\;e^{ -\frac{3}{2}(\lambda_3 +k)(\nu_1+\nu_2)};\qquad
\gamma_k=\frac{\Gamma(3k)}{\Gamma(k)^3}\\
L_k(\lambda,\nu)&=&\frac{\left( \varphi^{k-\frac{1}{2},-\frac{1}{2}}_{i(\lambda_1-\lambda2)}\right)'\left(\frac{\nu_1-\nu_2}{2}\right)
}{\lambda_1-\lambda_2-k}\;\;e^{ -\frac{3}{2}(\lambda_3 +k)(\nu_1+\nu_2)}.
\end{eqnarray*}
So we can write
\begin{eqnarray}\label{F_k}
F_k(\lambda,x)=\gamma_k V(x)^{-2k+1}\int_{x_2}^{x_1}\int_{x_3}^{x_{2}}
I_k(\lambda,\nu) e^{\nu_1+\nu_2}(e^{\nu_1}-e^{\nu_2})W_k(\nu,x)d\nu.
\end{eqnarray}
The integral representation  for the functions $F_k^*$ is given in   the following.
\begin{prop}
We have for $\lambda\in \mathbb{V_\mathbb{C}}$ and $x\in C$,
\begin{eqnarray}\label{inf*}
  F_{k}^*(\lambda,x)= \frac{\gamma_{k}}{4k^2}V(x)^{-2k } \int_{x_2}^{x_1}\int_{x_3}^{x_{2}}
L_k(\lambda,\nu)p(x,\nu)W_{k}(\nu,x)d\nu.
\end{eqnarray}
 where
 \begin{eqnarray*}
 p(x,\nu)&=&-2be^{2(\nu_1+\nu_2)}+(ab+3)e^{\nu_1+\nu_2}(e^{\nu_1}+e^{\nu_2})-
 2a(e^{2\nu_1}+e^{2\nu_2})\\&&-2(b^2+a)e^{\nu_1+\nu_2}+4b(e^{\nu_1}+e^{\nu_2})-6
 \end{eqnarray*}
and with $a=e^{x_1}+e^{x_2}+e^{x_3}$ and $b=e^{-x_1}+e^{-x_2}+e^{-x_3}$.
\end{prop}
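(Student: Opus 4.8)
The natural route is to exploit the shift identity \eqref{F*}: since $F_k^*(\lambda,x)=\tfrac{d_k(\lambda)}{6}\,V(x)\,F_{k+1}(\lambda,x)$ with $d_k$ the explicit cubic in $\lambda$ displayed for $n=3$, I would insert into this the integral representation \eqref{F_k} of $F_{k+1}$ (that is, \eqref{F_k} with $k$ replaced by $k+1$). This realizes $F_k^*(\lambda,x)$ as a constant times $d_k(\lambda)\,V(x)^{-2k}$ times $\int_{x_2}^{x_1}\!\int_{x_3}^{x_2} I_{k+1}(\lambda,\nu)\,e^{\nu_1+\nu_2}(e^{\nu_1}-e^{\nu_2})\,W_{k+1}(\nu,x)\,d\nu$, so that at this stage the integrand involves $\varphi^{k+\frac12,-\frac12}_{i(\lambda_1-\lambda_2)}$ and the weight $W_{k+1}$, whereas \eqref{inf*} asks for $\big(\varphi^{k-\frac12,-\frac12}_{i(\lambda_1-\lambda_2)}\big)'$ (hidden inside $L_k$) and for $W_k$. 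The whole proof is then the passage between these two forms.

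The first move is to change the parameter of the Jacobi function. By \eqref{fi2} one may rewrite $\varphi^{k+\frac12,-\frac12}_{i(\lambda_1-\lambda_2)}\!\big(\tfrac{\nu_1-\nu_2}{2}\big)$ in terms of $\big(\varphi^{k-\frac12,-\frac12}_{i(\lambda_1-\lambda_2)}\big)'\!\big(\tfrac{\nu_1-\nu_2}{2}\big)$, and the elementary identity $e^{\nu_1}-e^{\nu_2}=2e^{(\nu_1+\nu_2)/2}\sinh\tfrac{\nu_1-\nu_2}{2}$ absorbs the $(\sinh)^{-1}$ that this produces; one also has $W_{k+1}(\nu,x)=R(\nu,x)\,W_k(\nu,x)$, where $R(\nu,x)$ is the product of the six binomials $(e^{x_i}-e^{\nu_j})$ occurring in $W_{k+1}$. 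Carried through, and after the $\Gamma$-factors are collected using $\gamma_{k+1}=\tfrac{3(3k+1)(3k+2)}{k^2}\gamma_k$, the factor $\lambda_1-\lambda_2+k$ cancels and one is reduced to proving, up to an explicit constant $c$, an identity of the shape
\[ (\lambda_1-\lambda_3+k)(\lambda_2-\lambda_3+k)\int_{x_2}^{x_1}\!\!\int_{x_3}^{x_2} L_k(\lambda,\nu)\,R(\nu,x)^{k}\,d\nu \;=\; c\int_{x_2}^{x_1}\!\!\int_{x_3}^{x_2} L_k(\lambda,\nu)\,p(x,\nu)\,R(\nu,x)^{k-1}\,d\nu . \]

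To create the remaining polynomial $(\lambda_1-\lambda_3+k)(\lambda_2-\lambda_3+k)$ I would turn it into a differential operator in $\nu$ acting on $L_k(\lambda,\nu)$: writing $\sigma=\lambda_1+\lambda_2$ one has $4(\lambda_1-\lambda_3+k)(\lambda_2-\lambda_3+k)=(3\sigma+2k)^2-(\lambda_1-\lambda_2)^2$ (using $\lambda_1+\lambda_2+\lambda_3=0$); the square $(3\sigma+2k)^2$ is produced from the first-order relation $(\partial_{\nu_1}+\partial_{\nu_2})L_k=3(\sigma-k)L_k$ coming from the exponential factor of $L_k$, and $(\lambda_1-\lambda_2)^2$ from the second-order equation in $\tfrac{\nu_1-\nu_2}{2}$ satisfied by $\big(\varphi^{k-\frac12,-\frac12}_{i(\lambda_1-\lambda_2)}\big)'$, obtained from the Jacobi differential equation together with \eqref{fi2}–\eqref{fi3}. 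Once the scalar is a concrete operator, one integrates by parts in $(\nu_1,\nu_2)$ over the rectangle $x_3\le\nu_2\le x_2\le\nu_1\le x_1$, transferring it onto $R^k$; the boundary terms vanish because $R^k$ — and, when $k>1$, also its first derivatives — vanish identically on the boundary of the rectangle (the value $k=1$, and any other degenerate $k$, being recovered by continuity), while the apparent singularity of the operator along the diagonal $\nu_1=\nu_2$ is removable since $R$ is symmetric in $\nu_1,\nu_2$, so $\partial_{\nu_1}R-\partial_{\nu_2}R$ is divisible by $e^{\nu_1}-e^{\nu_2}$.

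What then remains is a pointwise differential-algebraic identity: the transposed operator applied to $R(\nu,x)^k$ must reproduce $p(x,\nu)$ times $R(\nu,x)^{k-1}$. Divisibility by $R^{k-1}$ is built in once one uses the factorization $R(\nu,x)=A(e^{\nu_1})\,B(e^{\nu_2})$ into two explicit cubics, because then $(\partial_{\nu_1}R)(\partial_{\nu_2}R)=R\cdot\big(e^{\nu_1}A'(e^{\nu_1})\big)\big(e^{\nu_2}B'(e^{\nu_2})\big)$; the quotient is then evaluated explicitly and, after rewriting everything through the symmetric combinations $a=e^{x_1}+e^{x_2}+e^{x_3}$ and $b=e^{-x_1}+e^{-x_2}+e^{-x_3}$, is identified with the stated $p(x,\nu)$. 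This last computation — the explicit evaluation and the matching of the polynomial $p$, together with the care needed for the boundary terms — is where the real work lies; everything preceding it is a routine, if slightly lengthy, transformation. The fact that $p$ depends on $x$ only through $a$ and $b$ is precisely what one expects, $p$ being the $W$-symmetrization of the non-symmetric factor $R$ induced by passing the operator through the integral.
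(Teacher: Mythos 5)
Your proposal follows the paper's proof essentially step for step: both insert the integral representation of $F_{k+1}$ into \eqref{F*}, cancel the factor $(\lambda_1-\lambda_2+k)$ of $d_k(\lambda)$ against \eqref{fi2} to produce $L_k$ and $W_{k+1}$, rewrite the remaining quadratic $(\lambda_1-\lambda_3+k)(\lambda_2-\lambda_3+k)$ (your $(3\sigma+2k)^2-(\lambda_1-\lambda_2)^2$ is the paper's $(-3\lambda_3+k)^2+2k(-3\lambda_3+k)+k^2-(\lambda_1-\lambda_2)^2$) as a differential operator in $\nu$ via the exponential eigenrelation and \eqref{fi3}, integrate by parts onto $W_{k+1}$, and end with the same pointwise identification of $p(x,\nu)W_k$. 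Your added care about the vanishing of boundary terms for small $k$ and the divisibility of the result by $W_k$ addresses points the paper passes over in silence, but the route is the same.
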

\begin{proof}
We first  write
\begin{eqnarray*}
&&(\lambda_1-\lambda_2+k)(\lambda_1-\lambda_3+k)(\lambda_2-\lambda_3+k)\\&&
 =\frac{(\lambda_1-\lambda_2+k)}{4}\Big((-3\lambda_3+k)^2+2k
(-3\lambda_3+k)-((\lambda_1-\lambda_2)^2-k^2)\Big).
\end{eqnarray*}
The formula  (\ref{n3}) for parameter $k+1$, together with  (\ref{fi2}) yield
\begin{eqnarray*}
  (\lambda_1-\lambda_2+k)F_{k+1}(\lambda,x) = 2(2k+1)\gamma_{k+1}   V(x)^{-2k-1} \int_{x_2}^{x_1}\int_{x_3}^{x_{2}}
 L_k(\lambda,\nu) W_{k+1}(\nu,x)d\nu.
\end{eqnarray*}
So using integration by parts,
\begin{eqnarray*}
&&( -3\lambda_3+k)(\lambda_1-\lambda_2+k)F_{k+1}(\lambda,x) \\&&= 2(2k+1)\gamma_{k+1}   V(x)^{-2k-1}\int_{x_2}^{x_1}\int_{x_3}^{x_{2}}
L_k(\lambda,\nu) (4k-(\partial_{\nu_1}+\partial_{\nu_2}))W_{k+1}(\nu,x)d\nu.
\end{eqnarray*}
and
\begin{eqnarray*}
&&( -3\lambda_3+k)^2(\lambda_1-\lambda_2+k)F_{k+1}(\lambda,x) \\&&= 2(2k+1)\gamma_{k+1}   V(x)^{-2k-1}\int_{x_2}^{x_1}\int_{x_3}^{x_{2}}
L_k(\lambda,\nu) (\partial_{\nu_1}+\partial_{\nu_2}-4k)^2W_{k+1}(\nu,x)d\nu.
\end{eqnarray*}
In the same way with the use of (\ref{fi3}),
\begin{eqnarray*}
&&((\lambda_1-\lambda_2)^2-k^2)(\lambda_1-\lambda_2+k)F_{k+1}(\lambda,x) \\&&= -2(2k+1)\gamma_{k+1}V(x)^{-2k-1} \int_{x_2}^{x_1}\int_{x_3}^{x_{2}}
e^{  -\frac{3}{2}(\lambda_3+k)(\nu_1+\nu_2)}
(\lambda_1-\lambda_2+k) \\&&\qquad\qquad\qquad\qquad\qquad\qquad \varphi^{k-\frac{1}{2},-\frac{1}{2}}_{i(\lambda_1-\lambda2)} \left(\frac{\nu_1-\nu_2}{2}\right)
(\partial_{\nu_1}-\partial_{\nu_2}  )W_{k+1}(\nu,x)d\nu\\&&=
2(2k+1)\gamma_{k+1}V(x)^{-2k-1}\int_{x_2}^{x_1}\int_{x_3}^{x_{2}}
e^{  -\frac{3}{2}(\lambda_3+k)(\nu_1+\nu_2)}\frac{\left( \varphi^{k-\frac{1}{2},-\frac{1}{2}}_{i(\lambda_1-\lambda2)}\right)'\left(\frac{\nu_1-\nu_2}{2}\right)
}{\lambda_1-\lambda_2-k}\\&&\qquad\qquad\qquad\qquad\qquad\qquad\qquad(\partial_{\nu_1}-\partial_{\nu_2}  )^2W_{k+1}(\nu,x)d\nu\\&&
-4k(2k+1)\gamma_{k+1}V(x)^{-2k-1}\int_{x_2}^{x_1}\int_{x_3}^{x_{2}}
e^{  -\frac{3}{2}(\lambda_3+k)(\nu_1+\nu_2)}
\frac{\left( \varphi^{k-\frac{1}{2},-\frac{1}{2}}_{i(\lambda_1-\lambda2)}\right)'\left(\frac{\nu_1-\nu_2}{2}\right)
}{\lambda_1-\lambda_2-k}\\&&\qquad\qquad\qquad\qquad\qquad\qquad\qquad \left(\frac{e^{\nu_1}+e^{\nu_2}}{e^{\nu_1}-e^{\nu_2}}\right)(\partial_{\nu_1}-\partial_{\nu_2}  ) W_{k+1}(\nu,x)d\nu
\\&&=2(2k+1)\gamma_{k+1}V(x)^{-2k-1}\int_{x_2}^{x_1}\int_{x_3}^{x_{2}}
 L_k(\lambda,\nu) \Big\{(\partial_{\nu_1}-\partial_{\nu_2}  )^2\\&&\qquad\qquad\qquad\qquad\qquad\qquad\qquad-2k \left(\frac{e^{\nu_1}+e^{\nu_2}}{e^{\nu_1}-e^{\nu_2}}\right)(\partial_{\nu_1}-\partial_{\nu_2}  )\Big\}W_{k+1}(\nu,x)d\nu.
\end{eqnarray*}
Hence it follows that
 \begin{eqnarray*}
&& F_{k}^*(\lambda,x)\nonumber  \\&&= \frac{\gamma_{k}}{4k^2}V(x)^{-2k } \int_{x_2}^{x_1}\int_{x_3}^{x_{2}}
L_k(\lambda,\nu)
\Big\{24k^2+  2k\left(\frac{e^{\nu_1}+e^{\nu_2}}{e^{\nu_1}-e^{\nu_2}}\right)(\partial_{\nu_1}-\partial_{\nu_2})\nonumber\\&&\qquad\qquad\qquad\qquad\qquad -10k(\partial_{\nu_1}+\partial_{\nu_2})+4\partial_{\nu_1}\partial_{\nu_2}
\Big\}
W_{k+1}(\nu,x)d\nu.
\end{eqnarray*}
 Now, it is straightforward computation to verify that
 $$p(\nu,x)=\Big\{24k^2+  2k\left(\frac{e^{\nu_1}+e^{\nu_2}}{e^{\nu_1}-e^{\nu_2}}\right)(\partial_{\nu_1}-\partial_{\nu_2}) -10k(\partial_{\nu_1}+\partial_{\nu_2})+4\partial_{\nu_1}\partial_{\nu_2}
\Big\}
W_{k+1}(\nu,x)$$
\end{proof}
\par Next, we can provide an integral expansion  for   $G_k$ by   differentiating
(\ref{F_k}) and (\ref{inf*}). Let us  introduce the operators
\begin{eqnarray*}
D_k&=&D_k(\lambda)= (\lambda_1-\lambda_3+2k)T_{\pi(e_1)}+(\lambda_2-\lambda_3+k)T_{\pi(e_2)}+\tau(\lambda)+k(\lambda_1-\lambda_3)+k^2\\
D_k^*&=&D_k^*(\lambda)=(\lambda_1-\lambda_3-2k)T_{\pi(e_1)}+(\lambda_2-\lambda_3-k)T_{\pi(e_2)}+\tau(\lambda)-k(\lambda_1-\lambda_3)+k^2
\end{eqnarray*}
where $\tau(\lambda)=\lambda_1^2+\lambda_2^2+\lambda_1\lambda_2$.
 Recall that  $\pi(e_1)=(\frac{2}{3},-\frac{1}{3},-\frac{1}{3})$ and $\pi(e_2)=(-\frac{1}{3},\frac{2}{3},-\frac{1}{3})$.
Our  main result from this section is the following.
\begin{thm}
For $\lambda,\,x\in \mathbb{V}$ we have
\begin{equation}\label{T}
 (\tau(\lambda)-k^2)G_k(\lambda,x)= D_k(F_k(\lambda,x))+D_k^*(F^*_k(\lambda,x)).
\end{equation}
\end{thm}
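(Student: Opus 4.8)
The plan is to reduce the identity \eqref{T} to a statement about the $W$--isotypic components of $G_k(\lambda,\cdot)$, and then to settle that statement with the integral formulas \eqref{F_k}, \eqref{inf*}, the Jacobi relations \eqref{fi2}--\eqref{fi3}, and the uniqueness property of $G_k$. Two preliminary observations drive the reduction. First, since the Cherednik operators commute, $\xi\mapsto T_\xi$ is linear, and a short computation with $\pi(e_1),\pi(e_2)$ shows that the combinations $(\lambda_1-\lambda_3+2k)\pi(e_1)+(\lambda_2-\lambda_3+k)\pi(e_2)$ and $(\lambda_1-\lambda_3-2k)\pi(e_1)+(\lambda_2-\lambda_3-k)\pi(e_2)$ appearing in $D_k,D_k^*$ equal $\lambda+\rho_k$ and $\lambda-\rho_k$ respectively, one has
\[
D_k(\lambda)=T_{\lambda+\rho_k}+a_0(\lambda),\qquad D_k^*(\lambda)=T_{\lambda-\rho_k}+b_0(\lambda),
\]
with $a_0(\lambda)=\tau(\lambda)+k(\lambda_1-\lambda_3)+k^2$ and $b_0(\lambda)=\tau(\lambda)-k(\lambda_1-\lambda_3)+k^2$. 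Second, on a $W$--invariant function $T_\xi$ reduces to $\partial_\xi-\langle\rho_k,\xi\rangle$, and on a $W$--anti-invariant one to $\partial_\xi+k\sum_{\alpha\in R^+}\langle\xi,\check\alpha\rangle\bigl(1+\coth\tfrac{\langle\alpha,x\rangle}{2}\bigr)-\langle\rho_k,\xi\rangle$, because there $(1-s)f$ equals $0$, resp.\ $2f$.

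Since $F_k(\lambda,\cdot)$ is $W$--invariant and, by \eqref{F*}, $F_k^*(\lambda,\cdot)$ is $W$--anti-invariant, applying these formulas and watching the scalar terms cancel gives
\[
D_k(F_k)=(\tau(\lambda)-k^2)F_k+\partial_{\lambda+\rho_k}F_k,\qquad
D_k^*(F_k^*)=(\tau(\lambda)-k^2)F_k^*+\partial_{\lambda-\rho_k}F_k^*+k\,\Phi_{\lambda-\rho_k}F_k^*,
\]
where $\Phi_\xi(x)=\sum_{\alpha\in R^+}\langle\xi,\check\alpha\rangle\coth\tfrac{\langle\alpha,x\rangle}{2}$. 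Now decompose $G_k(\lambda,\cdot)=F_k(\lambda,\cdot)+F_k^*(\lambda,\cdot)+H(\lambda,\cdot)$, with $H$ the $W$--isotypic projection onto the two-dimensional standard representation; this is legitimate because \eqref{FG} and \eqref{F*} identify the trivial and sign components of $G_k$ with $F_k$ and $F_k^*$. The three functions $\partial_{\lambda+\rho_k}F_k$, $\partial_{\lambda-\rho_k}F_k^*$, $\Phi_{\lambda-\rho_k}F_k^*$ all lie in the standard-isotypic component (being directional derivatives of the invariant $F_k$, and derivative- and $\coth$-multiples of the anti-invariant $F_k^*$, they transform in copies of the standard representation). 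Hence, dividing by $\tau(\lambda)-k^2$ for generic $\lambda$, the theorem becomes equivalent to the single identity
\[
\partial_{\lambda+\rho_k}F_k(\lambda,x)+\partial_{\lambda-\rho_k}F_k^*(\lambda,x)+k\,\Phi_{\lambda-\rho_k}(x)\,F_k^*(\lambda,x)=(\tau(\lambda)-k^2)\,H(\lambda,x).
\]

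To establish this last identity I would argue by uniqueness. Put
\[
\Psi(\lambda,x)=F_k(\lambda,x)+F_k^*(\lambda,x)+\frac{1}{\tau(\lambda)-k^2}\Bigl(\partial_{\lambda+\rho_k}F_k+\partial_{\lambda-\rho_k}F_k^*+k\,\Phi_{\lambda-\rho_k}F_k^*\Bigr);
\]
by the defining eigenvalue problem \eqref{FG} it suffices to check, for $\lambda$ in the regular set (the general case following by analytic continuation, every term being holomorphic in $\lambda$), that $T_{\pi(e_j)}\Psi=\lambda_j\Psi$ for $j=1,2$ and $\Psi(\lambda,0)=1$. The normalization is immediate: $F_k(\lambda,0)=1$, while $F_k^*$ and the standard-isotypic correction vanish at $0$ because by \eqref{F*} they carry the factor $V(x)$, whose triple zero at the origin absorbs the simple poles of the $\coth$'s. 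For the eigenvalue equations I would insert the integral formulas \eqref{F_k} for $F_k$ and \eqref{inf*} for $F_k^*$: the operators $\partial_{\lambda\pm\rho_k}$, $\Phi_{\lambda-\rho_k}$ and $T_{\pi(e_j)}$ act only in $x$, hence only on the kernels $V(x)^{-2k+1}$, $V(x)^{-2k}$, $W_k(\nu,x)$ and $p(x,\nu)$ --- the Jacobi factors $\varphi^{k-\frac12,-\frac12}_{i(\lambda_1-\lambda_2)}$ and its derivative in $I_k,L_k$ depend on $\nu$ alone. After carrying out these differentiations under the integral sign, integrating by parts in $\nu$ to transfer the resulting $\nu$--derivatives onto the weight, and using \eqref{fi2}--\eqref{fi3} exactly as in the computation of $p(\nu,x)$ in the preceding proposition, the equation $T_{\pi(e_j)}\Psi=\lambda_j\Psi$ reduces to a single algebraic identity between the two integrands.

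I expect this final verification to be the main obstacle, for two reasons. The boundary contributions in the $\nu$--integrations by parts over the polygon $[x_2,x_1]\times[x_3,x_2]$ vanish outright only when $k>1$, since $W_k$ vanishes on its boundary to order $k-1$; for $0<k\le1$ one proves the identity for $k$ large and continues analytically in $k$. And the concluding algebraic identity --- that the rational functions of $e^{x_i},e^{\nu_j}$ produced by the $x$--derivatives of the weights recombine into exactly $(\tau(\lambda)-k^2)$ times the $G_k$--integrand --- is long, though entirely mechanical and of the same nature as the $p(\nu,x)$ computation already performed.
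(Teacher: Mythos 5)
Your reduction is correct and in fact clarifies the structure of the operators: the identities $(\lambda_1-\lambda_3+2k)\pi(e_1)+(\lambda_2-\lambda_3+k)\pi(e_2)=\lambda+\rho_k$ and its analogue with $-\rho_k$ do hold (since $\lambda=(\lambda_1-\lambda_3)\pi(e_1)+(\lambda_2-\lambda_3)\pi(e_2)$ on $\mathbb{V}$ and $\rho_k=k(e_1-e_3)=2k\pi(e_1)+k\pi(e_2)$); the action of $T_\xi$ on invariants and anti-invariants is as you state; the scalar bookkeeping gives $D_k(F_k)=(\tau(\lambda)-k^2)F_k+\partial_{\lambda+\rho_k}F_k$ and the analogous formula for $D_k^*(F_k^*)$; the three correction terms are indeed standard-isotypic (each is in the image of a $W$-equivariant map from $\mathbb{V}$ or $\mathbb{V}\otimes\mathrm{sgn}\cong\mathbb{V}$); and the normalization $\Psi(\lambda,0)=1$ goes through. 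So the theorem is correctly reduced to the single identity $\partial_{\lambda+\rho_k}F_k+\partial_{\lambda-\rho_k}F_k^*+k\,\Phi_{\lambda-\rho_k}F_k^*=(\tau(\lambda)-k^2)H$.

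The gap is that this identity is never proved, and it carries the entire content of the theorem. Your plan for it --- insert (\ref{F_k}) and (\ref{inf*}), differentiate under the integral sign, integrate by parts in $\nu$, and match integrands --- is a sketch whose decisive step (the ``single algebraic identity between the two integrands \dots long, though entirely mechanical'') you acknowledge you have not carried out, and you also leave the boundary terms for $0<k\le 1$ to an analytic continuation in $k$ that is asserted rather than set up. This route is moreover needlessly heavy: the statement is purely algebraic and lives in the six-dimensional space spanned by $x\mapsto G_k(\lambda,w.x)$, $w\in S_3$. The paper finishes precisely there: using $wT_\xi w^{-1}=T_{w\xi}-\sum_{\alpha\in R_+,\,w\alpha\in R_-}k\langle\alpha,\xi\rangle s_{w\alpha}$ it computes $T_{\pi(e_1)}$ and $T_{\pi(e_2)}$ on each of the six translates $G_k(\lambda,w.x)$, hence $T_{\lambda\pm\rho_k}$ on $F_k$ and $F_k^*$, as explicit linear combinations of those translates, and checks that in $D_kF_k+D_k^*F_k^*$ every coefficient except that of $G_k(\lambda,x)$ cancels, the surviving one being $\tau(\lambda)-k^2$; no integral representation is needed. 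If you replace your integral-formula verification by this finite computation (which your isotypic reduction would even shorten slightly, since only the standard component remains to be checked), the proof closes; as written, it does not.
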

\begin{proof}
The proof is purely computational. Using  the following fact, see (5.1 ) of  \cite{O4},
   $$wT_\xi w^{-1}=T_{w.\xi}-\sum_{\alpha\in R_+,\;w.\alpha\;\in R_-}k\,\langle\alpha,\xi\rangle s_{w\alpha}.$$
 we  obtain
\begin{eqnarray*}
T_{\pi(e_1)}\;(G_k(\lambda,x))&=&\lambda_1G_k(\lambda,x),\\
T_{\pi(e_1)}\;(G_k(\lambda,s_{1,2}.x))&=&\lambda_2G_k(\lambda,s_{1,2}.x)-kG_k(\lambda,x),\\
T_{\pi(e_1)}\;(G_k(\lambda,s_{2,3}.x))&=&\lambda_1G_k(\lambda,s_{2,3}.x),\\
T_{\pi(e_1)}\;(G_k(\lambda,s_{1,3}.x))&=&\lambda_3G_k(\lambda,s_{1,3}.x)-k G_k(\lambda,\sigma.x)-kG_k(\lambda, x),\\
T_{\pi(e_1)}\;(G_k(\lambda,\sigma.x))&=&\lambda_2G_k(\lambda,\sigma.x))-k G_k(\lambda,s_{2,3}.x)\\
T_{\pi(e_1)}\;(G_k(\lambda,\sigma^2.x))&=&\lambda_3G_k(\lambda,\sigma^2.x))-kG_k(\lambda,s_{1,2}.x)-kG_k(\lambda,s_{2,3}.x),
\end{eqnarray*}
and
\begin{eqnarray*}
 T_{\pi(e_2)}\;(G_k(\lambda,x))&=&\lambda_2G_k(\lambda,x),\\
T_{\pi(e_2)}\;(G_k(\lambda,s_{1,2}.x))&=&\lambda_1G_k(\lambda,s_{1,2}.x)+kG_k(\lambda,x),\\
T_{\pi(e_2)}\;(G_k(\lambda,s_{2,3}.x))&=&\lambda_3G_k(\lambda,s_{2,3}.x)-kG_k(\lambda,x)\\
T_{\pi(e_2)}\;(G_k(\lambda,s_{1,3}.x))&=&\lambda_2G_k(\lambda,s_{1,3}.x)-k G_k(\lambda,\sigma^2.x)+kG_k(\lambda,\sigma. x),\\
T_{\pi(e_2)}\;(G_k(\lambda,\sigma.x))&=&\lambda_3G_k(\lambda,\sigma.x))-k G_k(\lambda,s_{1,2}.x),\\
T_{\pi(e_2)}\;(G_k(\lambda,\sigma^2.x))&=&\lambda_1G_k(\lambda,\sigma^2.x))+kG_k(\lambda,s_{2,3}.x).
\end{eqnarray*}
where  $\sigma=s_{1,3}s_{1,2}$. Thus we have
\begin{eqnarray*}
T_{\pi(e_1)}\;(F_k(\lambda,x))=&& \frac{1}{6} \Big\{(\lambda_1-2k)G_k(\lambda,x)+ (\lambda_2-k)G_k(\lambda,s_{1,2}.x)\\&& +
 (\lambda_1-2k)G_k(\lambda,s_{2,3}.x)+
 \lambda_3G_k(\lambda,s_{1,3}.x)\\&&+(\lambda_2-k) G_k(\lambda,\sigma.x)   +\lambda_3G_k(\lambda,\sigma^2.x)    \Big\}
\end{eqnarray*}
\begin{eqnarray*}
T_{\pi(e_2)}\;(F_k(\lambda,x))=&& \frac{1}{6}\Big\{ \lambda_2G_k(\lambda,x)+ (\lambda_1-k)G_k(\lambda,s_{1,2}.x)\\&& +
 (\lambda_3+k)G_k(\lambda,s_{2,3}.x)+
 \lambda_2G_k(\lambda,s_{1,3}.x)\\&&+(\lambda_3+k) G_k(\lambda,\sigma.x)   +(\lambda_1-k)G_k(\lambda,\sigma^2.x)     \Big\},
\end{eqnarray*}
\begin{eqnarray*}
T_{\pi(e_1)}\;(F_k^*(\lambda,x))=&& \frac{1}{6}\Big\{(\lambda_1+2k)G_k(\lambda,x)- ( \lambda_2+k)G_k(\lambda,s_{1,2}.x)\\&&
 -(\lambda_1+2k)G_k(\lambda,s_{2,3}.x)-
 \lambda_3G_k(\lambda,s_{1,3}.x)\\&&+(\lambda_2+k) G_k(\lambda,\sigma.x)   +\lambda_3G_k(\lambda,\sigma^2.x)    \Big\},
\end{eqnarray*}
and
\begin{eqnarray*}
T_{\pi(e_2)}\;(F_k^*(\lambda,x))=&& \frac{1}{6} \Big\{\lambda_2G_k(\lambda,x)- (\lambda_1+k)G_k(\lambda,s_{1,2}.x)\\&& +
 (-\lambda_3+k)G_k(\lambda,s_{2,3}.x)
 -\lambda_2G_k(\lambda,s_{1,3}.x)\\&&+(\lambda_3-k) G_k(\lambda,\sigma.x)   +(\lambda_1+k)G_k(\lambda,\sigma^2.x)\Big\}.
\end{eqnarray*}
So, formula (\ref{T}) can be checked by   a straightforward calculations.
\end{proof}

 \end{document}